\theoremstyle{definition}
\newtheorem{definition}{Definition}
\newtheorem{theorem}{Theorem}
\newcommand{\norm}[1]{\left\lVert#1\right\rVert}
\begin{document}
	\date{}	
	\title{Solution of Robust Linear Optimization Problems}
	\author[1]{Parthasarathi Mondal} \author[2]{Akshay Kumar Ojha}
	\affil[1]{\footnotesize Department of Mathematics, IIT Bhubaneswar, India\\ pm24@iitbbs.ac.in}
	\affil[2]{\footnotesize Department of Mathematics, IIT Bhubaneswar, India\\akojha@iitbbs.ac.in}
	\maketitle 
	
	\begin{abstract}
		Robust optimization(RO) is an important tool for handling optimization problem with uncertainty. The main objective of RO is to solve optimization problems due to uncertainty associated with constraints satisfying all realizations of uncertain values within a given uncertainty set. The challenge of RO is to reformulate the constraints so that the uncertain optimization problem is transformed into a tractable deterministic form. In this paper, we have given more emphasis to study the robust counterpart(RC) of the RO problems and have developed a mathematical model on the solution strategy for robust linear optimization problems, where the constraints only are associated with uncertainties. The box and ellipsoidal uncertainty sets are considered and some illustrative numerical examples have been solved in each corresponding case for validating our proposed method.
	\end{abstract}
	\vspace{1 mm} 
	\noindent{Keywords: robust optimization; robust counterpart; uncertainty; tractability.}
	
	\section{Introduction}
	Optimization plays an important role in the field of engineering, industry, medicine, business and almost all branches of science. Matheatical model of real life optimization problems very often associated with uncertain data. While considering, data can be inherently stochastic or random or it can be uncertain due to certain error associated with it. Some times, the error in the data may be incorrect estimation and due to lack of knowledge about the parameters of the mathematical model for uncertain demand in the particular inventory. Parameter uncertainty is a challenging task for solving optimization problem. In recent days, handling uncertainty in the optimization problem has drawn major focus in the mathematical programing community. During 1955, Dantzig \cite{1} put a foundation stone for solving optimization problem under uncertainty. Subsequently Charnes and Cooper \cite{2} striengthen the optimization techniques for solving stochastic programing and optimization under probabilistic constraints. In the solution of inventory problem, Scraf \cite{3} projected that the future demand will form a distribution that will differs from the governing past history in an unpredicted way and the majority of research work in decision making under uncertain data will be relied on the precise knowledge of probability. The solution of optimization problems under uncertainty remains a challenging task for the researcher for a quite long time. In the early 1970's Soyster \cite{4} first proposed a worst-case model for linear optimization considering the constraints are satisfying perturbations of the model parameters. Broadly, uncertain optimization can be dealt with two approaches such as stochastic optimization(SO) and robust optimization(RO). In SO, the uncertain optimzation is reformulated by considering the true probability distribution and then it is computationally tractable to solve. It gives only a probabilistic guarantee of optimal solution. More works on stochastic optimization can be found due to the work of Prekopa \cite{5}, Birge and Louveaux \cite{6}, and Shapiro \cite{7}. Impressed on the work of Soyester \cite{4}, where the column vectors of constraint matrix were constrained to belong to ellipsoidal uncertainty while solving uncertain problem, Falk \cite{8} finds the exact solution of inexact linear programing.\\
	
	Robust optimization on the other hand focused on more elaborated uncertainty set to addition the issues of over-conservatism in worst-used models, as well as to maintain computationally tractablity of the proposed method. The robust approach defends the uncertainty without using any sense of its probability distribution. In the mid 1990's, the research work due to Ben-Tal et al.\cite{9} have made a break through in the direction of RO. Subsequent works on RO by Ben-Tal and Nemirovski \cite{10,11,12,13} drwan much attention to the research community to work in this direction. Considering limited informations of underlying uncertainties such as mean and support, the model can provide a solution that is feasible to constraints with probability, although avoiding the extreme conservatism of Soyester's worst case model. Due to the advances of scientific computing and development of interior point method for convex optimization problems, particularly for semidefinite optimization by Boyed and Vandenberghe \cite{14} excelerate the interest the field of RO. The tractable uncertain set such as ellipsoid by Ben-Tal and Nemirovski \cite{11}, and Polytopes due to Bertsimas and Sim \cite{15} helps to achieve the computational tractability of robust linear constraints and robust counterparts of second order conic constraints respectively. The RO technique has a wide range of application in multi-period optimization in which future decision(recourse variable) depends on the realisation of present uncertainty. It is worth mentioning that such models are intractable. Ben-Tal et al. \cite{12} proposed a tractable approach for solving fixed recourse instances using  affine decision rules,  recouruse variables as affine functions of the uncertainty realization. A recent work on inventory model by Bertsimas, Thiele \cite{16} drawn attention to the researchers to work on the problem on uncertainty. As Robust linear optimization models are polynomial in size and in the form of linear programing(LP) or second order conic programming (SOCP) and requires only modest assumptions about distribution such as known mean and bounded support, it motivated the researchers to work in this direction. There have been many publications that show value of RO in many fields of applications including finance, energy, health care, scheduling and marketing etc. Indeed, the robust concepts and techniques are very useful as these are tailored to the information at hand and leads to tractable formulation.\\
	
	Many research works regarding robust optimization due to uncertainty have been discussed in many research papers, but very few of them have illustrated the details of solution approach with neumerical examples into account. In the present work we give a concise description of the basics of RO and related robust counterpart with suitable numerical examples. We introduce a new approach for the solution method that describes the descretisation technique of uncertainty set(region) of different shapes. In this approach, after selecting a nominal value within the uncertainty set, the set is descretised into infinitely many perturebations so as to cover the entire uncertainty set. This makes the sense of worst-case realization and gives the solution robust. Convergence idea ensures the desired robust optimal solution.\\
	
	Following the introduction, some important definitions, results, and perturbation formulation, and Ben-Tal's approaches of dealing RO problem are discussed in Section 2. Also the concept of robustness and the uncertain perturbations associated with RO have been discussed in this section. Section 3 describes the proposed work for solving RO problems. The solution method of RO problems have been discussed in this section with an algorithm and suitable numerical examples.  Finally, some conclusions related to our work have been incorporated in section 4.
	
	\section{Preliminary Discussions of Basic Concepts}
	Robust optimization (RO) is a new direction to solveoptimization problems uncertainties. Under this framework, the objective and constraint functions are belong to certain uncertainty sets. This approach aims to determine an optimal solution for the worst-case objective function. Robust optimal design is one with the best worst-case performance \cite{9}.
	%
	
	In real life, the data in the objective and constraint functions in an RO are often not exactly known or at best known with some noise. The most common reasons for the data uncertainty could be due to the following conditions.
	\begin{enumerate}
		\item[(i)] Some of data entries do not exist when the data problem is solved and hence are replaced with their forcasts. These data entries are thus subject to prediction errors.
		\item[(ii)] Some of the data can not be measured exactly. In reality their values drift around the measured "nominal" values. These values are subject to measurement errors.
		\item[(iii)] Some of the decision variables can not be implemented exactly as calculated.
	\end{enumerate} 

\bigskip
	The general formulation of an uncertain linear optimization problem is given by,
	\begin{align*}
	\label{2.1}
	\{\min \{ c^Tx: Ax \geq b, x\in\mathbb{R}^n\}\}_{(c, A, b)}\in \mathcal{U} \tag{2.1}
	\end{align*}
	In	more precise form it looks, 
	\begin{align*}
	\label{2.2}
	&     &&\min \quad c^Tx \\
	&     &&~\text{s.t.} ~~~Ax \geq b, ~ (c,A,b)\in \mathcal{U}. \tag{2.2}
	\end{align*} 
	where, $c \in \mathbb{R}^n$, $A \in \mathbb{R}^{m \times n}$, $b \in \mathbb{R}^m$ represent the uncertain coefficients lying in $\mathcal{U}$ which denotes an uncertain set.
	
	\subsubsection{Some Basic assumptions:}
	We may assume without loss of generality that, (i) the objective is certain; (ii) the constraint right-hand side is certain; (iii) $\mathcal{U}$ is compact and convex; and (iv) the uncertainty is constraint-wise. We explain the reasons of why these four assumptions are not restrictive.
	\begin{enumerate}
		\item \textbf{The objective function is certain.}\\ 
		Suppose, the coefficients of the objective function ($\textbf{c}$) are uncertain and say $c \in \mathcal{C}$, where $\mathcal{C}$ is the uncertain set. Then \eqref{2.1} can be defined as follows,
		\begin{align*}
		\label{2.3}
		\min_x \max_{c \in \mathcal{C}} \quad [ c^Tx: Ax \geq b \quad \forall (A,b)\in \mathcal{U}] \tag{2.3}
		\end{align*}
		The problem \eqref{2.3} can be reformulated as follows,
		\begin{align*}
		\label{2.4}
		\min_{x,\alpha} \quad [ \alpha:  \alpha \geq c^Tx \quad \forall c \in \mathcal{C}, \quad Ax \geq b \quad \forall (A,b) \in \mathcal{U}] \tag{2.4}
		\end{align*}
		The above problem \eqref{2.4} is an uncertain problem in the variables $x$  and $\alpha$ but its objective function $\alpha$ is not at all affected by uncertainty. In the subsequent discussions, we will avoid the term 'uncertain' for the objective coefficient(c).
		\item  \textbf{The right-hand sides of the constraints are certain.}\\
		The uncertain right-hand side of a constraint can be translated into a certain coefficient by introducing an extra variable $x_{n+1}=-1$.
		\item\textbf{ The uncertainty set $\mathcal{U}$ is convex.}\\
		The RC of an RO problem remain unchanged if the uncertaity set $\mathcal{U}$ is replaced by its closed convex hull $conv(\mathcal{U})$, the smallest convex set containing $\mathcal{U}.$
		\item \textbf{The uncertainty is associated with constraints.}\\
		The  formulation (2.2) of RO problem can be written constraint-wise as, 
		\begin{align*}
		& &     &\min \quad c^Tx\\
		& &     &\text{s.t.} ~\quad a_i^Tx \geq b_i, ~\forall ~a_i \in \mathcal{U}_{a_i},~\forall~ b_i \in \mathcal{U}_{b_i},~ i=1,2,\dots,m.
		\end{align*}	
		where, $a_i$ represents the $i^{th}$ row of the uncertain matrix $A$, and $\mathcal{U}_{a_i} \subset \mathbb{R}^n$,   $\mathcal{U}_{b_i} \subset \mathbb{R}$ are the given uncertainty sets for $i=1,2,\dots,m$.
	\end{enumerate}
	
	\subsection{Important results and definitions}
	
	Some important definitions and results are incorporated below to illustrate our proposed work.
	\begin{definition}
		A solution $x$ to the RO problem \eqref{2.2} is called \textbf{\emph{robust feasible}}, if it satisfies all the contraints $Ax \geq b$ for all realizations of the uncertain data i.e., for all $(c,A,b)\in \mathcal{U}.$
	\end{definition}
	
	\begin{definition}[Robust value]
		The robust value $\hat{c}(x)$ of the objective function in \eqref{2.2} is the largest value of the "true" objective $c^Tx$ over all realizations of the uncertain data.
		and is defined by, $\hat{c}(x)=\sup_{(c,A,b)\in\mathcal{U}}[c^Tx].$
	\end{definition}
	
	\subsubsection{Robust counterpart(RC)}
	\begin{definition}[]
		The RC of the uncertain problem \eqref{2.2} is an optimization problem of minimizing the robust value of the objective functions over all the robust feasible solutions to the uncertain problem which is defined as follows.
		\begin{equation*}
		\label{2.5}
		\min \left\{\hat{c}(x)=\sup_{(c,A,b)\in \mathcal{U}} c^Tx: Ax\geq b, \quad \forall(c,A,b)\in \mathcal{U}\right\}  \tag{2.5}
		\end{equation*}
	\end{definition}
	\vspace{2mm}
	In a more easier form, the above RC can be written as,
	\begin{align*}
	\label{2.6}
	& &     &\min \quad c^Tx\\
	& &     &\text{s.t.} ~\quad Ax \geq b, ~~ \forall(c,A,b)\in \mathcal{U}. \tag{2.6}
	\end{align*}
	An optimal solution of the RC \eqref{2.6} is a robust optimal solution of the RO problem \eqref{2.2} and the optimal value of the  RC is same as the robust optimal value of \eqref{2.2}.\\
	
	In view of Assumption1, if we assume the objective function is certain, the RC of the original problem \eqref{2.2} can be given as,
	\begin{equation*}
	\label{2.7}
	\min \left\{c^Tx: x\in\mathbb{R}^n, Ax\geq b, ~ \forall(A,b)\in \mathcal{U}\right\} \tag{2.7}
	\end{equation*}
	It is worth to mention that the uncertainty set is now a set in the space of constraint data i.e.,the robust counterpart of an RO problem with no uncertainty in the objective function is purely \textit{'constraint-wise'}.
	The following procedure is incorporated in order to construct the RC of an an RO problem. ~\cite{9}
	\begin{enumerate}
		\item[(i)] Keep the original certain objective function as it is.
		\item[(ii)] Replace each of the original constraints $(Ax)_i \geq b_i\Longleftrightarrow a_i^Tx\ge b_i$
		with its corresponding counterpart $ a_i^Tx\geq b_i, ~ \forall a_i \in\mathcal{U}_{a_i}, b_i \in\mathcal{U}_{b_i}.$
	\end{enumerate}

	\subsection{Uncertain set and perturbation}
	We mainly focus on solving the linear optimization problems associated with uncertainty. Consider a robust linear optimization problem with no uncertainty in the objective function \cite{9}.
	\begin{align*}
	\label{2.8}
	&   &&\min: \qquad 2x_1+3x_2\\
	&   &&~~\text{s.t.}  \qquad a_{11}x_1+a_{12}x_2 \geq b_1 \\
	&   &&\qquad\qquad a_{21}x_1+a_{22}x_2 \geq b_2 \tag{2.8}
	\end{align*}
	Let the constraints and the R.H.S coefficients are uncertain on the uncertainty set $\mathcal{U},$ where an element of $\mathcal{U}$ is:
	$\begin{bmatrix}  a_{11}  & a_{12} & b_1\\ a_{21} & a_{22} & b_2\end{bmatrix}=\begin{bmatrix}  a_{11}  & a_{12} \\ a_{21} & a_{22}\end{bmatrix}
	\begin{bmatrix} b_1 \\ b_2   \end{bmatrix}\in \mathcal{U} \subset \mathbb{R}^{2 \times 2} \times \mathbb{R}^{2 \times 1}$
	
	Now let $u=\begin{bmatrix}  1  & 2 & 1\\ 4 & 1 & 2\end{bmatrix}$ be a nominal element of $\mathcal{U}$ and the other elements are obtained from this nominal element $u$ by uniformly adding or subtracting 0.5 to each component. Then we get a finite number of elements of $\mathcal{U}$ \cite{9}.\\
	Now consider the general case where any component can be perturbed by adding or subtracting any amount from 0 up to and including 0.5. Using the nominal element $u$ we can construct an uncertainty set $\mathcal{U}$ with an infinite number of elements as follows.
	\begin{align*}
	\mathcal{U}=\left\{
	\begin{bmatrix}  a_{11}  & a_{12} & b_1\\ a_{21} & a_{22} & b_2\end{bmatrix}=\begin{bmatrix}  1  & 2 & 1\\ 4 & 1 & 2\end{bmatrix}+\sum_{l=1}^6 \xi_l P_l
	\right\}
	\end{align*}
	\begin{align*}
	& &     &P_1=\begin{bmatrix}  0.5  & 0 & 0\\ 0 & 0 & 0\end{bmatrix} \quad P_2=\begin{bmatrix}  0  & 0.5 & 0\\ 0 & 0 & 0\end{bmatrix} \quad P_3=\begin{bmatrix}  0  & 0 & 0.5\\ 0 & 0 & 0\end{bmatrix}\\
	& &      &P_4=\begin{bmatrix}  0  & 0 & 0\\ 0.5 & 0 & 0\end{bmatrix} \quad P_5=\begin{bmatrix}  0  & 0 & 0\\ 0.5 & 0 & 0\end{bmatrix} \quad P_6=\begin{bmatrix}  0  & 0 & 0\\ 0 & 0 & 0.5\end{bmatrix}
	\end{align*}
	and $\xi \in\mathcal{Z} = \{\xi=(\xi_1,\xi_2,\dots \xi_6)\in \mathbb{R}^6:-1\leq\xi_l\leq1, ~l= 1, \dots ,6\}$. The set $\mathcal{Z}$ is called the perturbation set. Corresponding to each realization of $\xi_l$ in the interval [-1,1], there is an element in $\mathcal{U}$.\\
	The matrices $P_l$ indicate that the $l$-th component in a $u\in \mathcal{U}$ is to be considered uncertain. 
	For example $P_1$ indicates that parameter $a_{11}$ can be perturbed by an amount between -0.5 to 0.5. Thus, the corresponding sets $\mathcal{U}_i$ can be written as,
	\begin{align*}
	\mathcal{U}_1=\{(a_1,b_1)=(1,2,1)+\sum_{l=1}^3 \xi_1^lP_1^l\quad :\xi_1 \in \mathcal{Z}_1\}
	\end{align*} and
	\begin{align*}
	\mathcal{U}_2=\{(a_2,b_2)=(4,1,2)+\sum_{l=1}^3 \xi_2^lP_1^l\quad :\xi_2 \in \mathcal{Z}_2\}
	\end{align*}
	where
	\begin{align*}
	& &       &	P_1^1=(0.5,0,0),\quad P_1^2=(0,0.5,0),\quad P_1^3=(0,0,0.5),\\
	& &       &  \mathcal{Z}_1=\{\xi=(\xi_1,\xi_2,\xi_3)\in \mathbb{R}^3|-1\leq\xi_l\leq1, ~ l= 1, \dots ,3\}.\\
	& &       &	P_2^1=(0.5,0,0),\quad P_2^2=(0,0.5,0),\quad P_2^3=(0,0,0.5),\\
	\text{and}\\
	& &       &  \mathcal{Z}_2=\{\xi=(\xi_1,\xi_2,\xi_3)\in \mathbb{R}^3|-1\leq\xi_l\leq1, ~ l= 1, \dots ,3\}.
	\end{align*}
	In general, for the constraint $a_i^Tx\leq b_i; ~ [a_i;b_i]\in \mathcal{U}_i$ in an RO problem, the uncertainty set $\mathcal{U}_i$ can be written as,
	\begin{align*}
		\label{2.9}
	\mathcal{U}_i=\{[a_i;b_i]=[a_i^0;b^0]+\sum_{l=1}^{L_i} \xi_i^lP_i^l\quad:\xi \in \mathcal{Z}_i\} \tag{2.9}
	\end{align*}
	Then the constraint in the corresponding robust counterpart is,
	\begin{align*}
	a_i^Tx\leq b_i ~~ \forall \{[a_i;b_i]=[a_i^0;b_i^0]+\sum_{l=1}^{L_i} \xi_i^lP_i^l\quad :\xi \in \mathcal{Z}_i\}
	\end{align*}
	where $(a_i^0,b_i^0)$ is the nominal value of $(a_i,b_i)$, $\mathcal{Z}_i$ is the perturbation set, and ${L_i}$ is the number of elements in $(a_i,b_i)$ that are to be considered uncertain. \\
	
	We can write this in a more generalized form as
	\begin{align*}
	\label{2.10}
	\mathcal{U}=\{[a;b]=[a^0;b^0]+\sum_{l=1}^L \xi_lP_l \quad:\xi \in \mathcal{Z}\} \tag{2.10}
	\end{align*}
	When $\mathcal{U}$ is infinte, the tractability of the corresponding RC will depend on the structure of $\mathcal{U}$. In particular, the structure of the corresponding perturbation set characterizes the tractability.\\
	
	\subsection{Ben-Tal and Nemirovski approach to robust optimization \cite{9}} 
	Consider the linear optization program with no uncertainty in the right-hand side $b_i$ of the constraints as, 
	\begin{align*}
	& &     &\max \quad c^Tx\\
	& &     &\text{s.t.} \quad~  a_i^Tx \geq bi, ~\quad i=1,2,\dots,m
	\end{align*}
	
	Assume each row $a_i$ of A is uncertain which lies in the ellipsoidal uncertainty sets $\mathcal{U}_{a_i}$, where 
	\begin{center}
		$\mathcal{U}_{a_i}=\left\{a_i={a_i}^0+\sum_{l=1}^{L_i}P_l \xi_l \quad:\norm{\xi_l}_2 \geq 1 \right\} \quad \forall i=1,2,\dots,m$
	\end{center}
	Worst-case realization forces to reformulate the constraints as,
	\begin{equation*}
	\begin{aligned}
	& \underset{a_i}{\text{max}}	
	&  a_i^Tx \geq b_i,~ \forall i=1,2,\dots,m.
	\end{aligned}
	\end{equation*}
	The maximum in the interior makes the problem easier. Indeed,
	\begin{align*}	
	\max[a_i^Tx: a_i \in \mathcal{U}_{a_i}] =[{a_i}^0]^Tx+\max\left\{\sum_{l=1}^{L_i}\xi_l^TP_i^Tx, \quad: \norm{\xi_l}_2 \geq 1\right\}
	\end{align*}
	Now use the fact that $\underset{\norm{u_i}_2 \leq 1}{\text{max}}\sum_{l=1}^{L_i}\xi_l^TP_i^Tx=\norm{P_i^Tx}_2$. 
	This equality is due to Cauchy-Schawrz applied to $\xi$ and $P_i^Tx.$ Therefore, the RC is obtained in the following form,
	\begin{align*}
	\label{2.11}
	& &     &	\max \quad c^Tx\\
	& &     &	\text{s.t.} \quad~  a_i^Tx+\sum_{l=1}^{L_i}\norm{P_i^Tx}_2 \geq bi,\quad i=1,2,\dots,m. \tag{2.11}
	\end{align*}
	This is a second-order cone programming problem (SOCP)\cite{9}.
	
	In this context, the general form of a second order conic problem can be given as follows,
	\begin{align*}
	& &     &\min \qquad \dfrac{1}{2}c^Tx\\
	& &     &\text{s.t} ~~~~\quad \norm{Ax-b}_2 \geq f^tx+g
	\end{align*} where  $A$ is a $m \times n$ matrix, $f$, $g$ are functions, and the remaining quantities have conformable dimensions.
	The SOCP is usually obtained as a result of incorporating robustness into linear programme..

	\section{Proposed approach and methodology}
	Notice that \eqref{2.2} contains infinitely many constraints due to the for all ($\forall$) quantifier imposed by the worst case formulation, i.e., it seems intractable in its current form. There are two ways to deal with this. The first way is to apply robust reformulation techniques to exclude the for all ($\forall$) quantifier. If deriving such a robust reformulation is not possible, then the second way is to apply the discretisation approach. In our work, we describe the details of these two approaches.
	
	\subsubsection{Methodologies for solving a robust linear optimization problem}
	Before going to solve the robust linear optimization problems, we give a methodology for solving such problems. The following steps are adopted to obtain the optimal solution of a robust linear optimization problem \eqref{2.2}.
	
	\textbf{Proposed Algorithm}\\
	\textbf{Step1:} Obtain the tractable form of the corresponding robust counterpart of the considered robust optimisation problem.\\
	\textbf{Step2:} Find out the uncertain coefficients corresponding to the constraints and objective function and set an arbitrary nominal value for all those uncertain values within the specified uncertainty set.\\
	\textbf{Step3:} Using MATLAB or MATHEMATICA or any other software available for the same, take as many grids of the uncertain region as possible so as to increase the number of realised values and to realise the worst-case phenomenon of the uncertainty. Then Make a table of the optimal solutions according to the realised values.\\
	\textbf{Step4:} Finally, find out the optimum solution from the table correct up to the desired choice of decimal place.
	
	We give a theorem that guarantees the feasibility of the solutions of an RO problem and its RC.  Consider a robust linear optimization problem of the form \eqref{2.1}, 
	\begin{align*}
	\label{P}
	\{\min_x \{ c^Tx: Ax \geq b, x\in\mathbb{R}^n\}\}_{(c, A, b)}\in \mathcal{U} \tag{P}
	\end{align*} 
	and its RC \eqref{2.7} as
	\begin{equation*}
	\label{P*}
	\min_x \left\{c^Tx: x\in\mathbb{R}^n, Ax\geq b, ~ \forall(A,b)\in \mathcal{U}\right\} \tag{P*}
	\end{equation*}
	
	\begin{theorem}
		\label{th1}
		Let \eqref{P} be an RO optimization problem with constraint-wise uncertainty. Then the RC of \eqref{P} is feasible if and only if all the instances of \eqref{P} are feasible, and its robust optimal value is given by, ${\sup c^Tx}_{(A,b)\in\mathcal{U}}$
	\end{theorem}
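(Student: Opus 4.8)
\medskip
\noindent\textbf{Proof plan.}
The plan is to treat the two assertions in turn, the feasibility equivalence first and the value formula second; the constraint-wise assumption, which makes $\mathcal{U}$ a Cartesian product $\mathcal{U}=\mathcal{U}_1\times\cdots\times\mathcal{U}_m$ with each $\mathcal{U}_i\subset\mathbb{R}^{n+1}$ a nonempty compact convex set of admissible pairs $(a_i,b_i)$ (Assumptions (iii), (iv)), is used throughout. The starting observation is that, directly from the definition \eqref{P*} of the RC, a point $x$ is feasible for \eqref{P*} exactly when $a_i^Tx\ge b_i$ for every $i$ and every $(a_i,b_i)\in\mathcal{U}_i$; thus the feasible set of \eqref{P*} is $\mathcal{X}:=\bigcap_{i=1}^{m}\mathcal{X}_i$, where $\mathcal{X}_i:=\{x : a_i^Tx\ge b_i\ \ \forall(a_i,b_i)\in\mathcal{U}_i\}$, each $\mathcal{X}_i$ being closed and convex (an intersection of half-spaces) and, by compactness of $\mathcal{U}_i$, equal to $\{x : g_i(x)\ge 0\}$ for the finite-valued concave function $g_i(x):=\min_{(a_i,b_i)\in\mathcal{U}_i}(a_i^Tx-b_i)$.

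The implication ``\eqref{P*} feasible $\Rightarrow$ every instance of \eqref{P} feasible'' is immediate: if $x^{*}\in\mathcal{X}$ then for any realization $(A,b)\in\mathcal{U}$ we have $a_i^Tx^{*}\ge b_i$ for all $i$, so $x^{*}$ is feasible for that instance. The converse is the real content, and it is the step I expect to be the main obstacle. I would prove it by contraposition: assume \eqref{P*} is infeasible, i.e.\ $\mathcal{X}=\emptyset$, and exhibit one instance $(A,b)\in\mathcal{U}$ with $\{x : Ax\ge b\}=\emptyset$. Since the (tagged) index set $\{(i,(a,b)) : (a,b)\in\mathcal{U}_i,\ i\le m\}$ is compact, infeasibility of the semi-infinite linear system $\{a_i^Tx\ge b_i : i\le m,\ (a_i,b_i)\in\mathcal{U}_i\}$ yields, by a theorem of the alternative with Carath\'eodory's theorem used to keep the infeasibility certificate finite, weights $\lambda_i^{(j)}\ge 0$ (not all zero) attached to finitely many admissible pairs $(a_i^{(j)},b_i^{(j)})\in\mathcal{U}_i$, $j=1,\dots,p_i$, with $\sum_{i,j}\lambda_i^{(j)}a_i^{(j)}=0$ and $\sum_{i,j}\lambda_i^{(j)}b_i^{(j)}>0$. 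Put $\lambda_i:=\sum_j\lambda_i^{(j)}$ and, whenever $\lambda_i>0$, let $(\tilde a_i,\tilde b_i):=\lambda_i^{-1}\sum_j\lambda_i^{(j)}(a_i^{(j)},b_i^{(j)})$, which lies in $\mathcal{U}_i$ by convexity of $\mathcal{U}_i$; pick any $(\tilde a_i,\tilde b_i)\in\mathcal{U}_i$ when $\lambda_i=0$. The certificate then collapses to $\sum_i\lambda_i\tilde a_i=0$, $\sum_i\lambda_i\tilde b_i>0$ with $0\ne\lambda\ge 0$, which by Farkas' lemma says precisely that the single instance $(\tilde a_1,\dots,\tilde a_m;\tilde b_1,\dots,\tilde b_m)\in\mathcal{U}_1\times\cdots\times\mathcal{U}_m=\mathcal{U}$ has empty feasible set; hence not every instance of \eqref{P} is feasible. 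The two hypotheses enter here in an essential way: without the product structure the aggregated normals $\lambda_i\tilde a_i$ need not reassemble into a genuine element of $\mathcal{U}$, and without convexity of the $\mathcal{U}_i$ the barycentres $(\tilde a_i,\tilde b_i)$ need not belong to $\mathcal{U}_i$.

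For the value formula, note that by \eqref{2.5}--\eqref{P*} the RC is $\min\{\hat c(x) : x\in\mathcal{X}\}$; since the objective is certain, $\hat c(x)=\sup_{(A,b)\in\mathcal{U}}c^Tx=c^Tx$, so the optimal value of \eqref{P*} is $\inf\{c^Tx : x\in\mathcal{X}\}$, which is by definition the robust optimal value of \eqref{P}. If this infimum is attained at a robust optimal solution $x^{*}$, the common optimal value equals $\hat c(x^{*})=\sup_{(A,b)\in\mathcal{U}}c^Tx^{*}$, the expression displayed in the statement; and when $\mathcal{X}=\emptyset$ both \eqref{P*} and the certificate-aggregated instance above are infeasible, so the equivalence and the formula still read consistently under the usual conventions. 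This completes the plan.
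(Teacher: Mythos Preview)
Your proposal is correct and follows essentially the same route as the paper: prove the nontrivial implication by contraposition, extract a finite infeasibility certificate, aggregate per constraint index via convex combinations so that the barycentres $(\tilde a_i,\tilde b_i)$ land back in the convex sets $\mathcal{U}_i$, and conclude that the resulting single instance is infeasible. Your version is in fact cleaner---you invoke Farkas and Carath\'eodory explicitly and also treat the optimal-value assertion, whereas the paper's argument relies on a finite-intersection/minimax step and leaves the value formula implicit.
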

	
	\begin{proof}
		First we show that, if all the instances are feasible, then the RC is feasible.
		On the contray, let \eqref{P*} is infeasible. Then the family of sets, $$\{S_i(a_i)=\{x\in\mathbb{R}^n:a_i^Tx\geq b_i\}\}_{i=1,2,\dots, m,~ a_i\in\mathcal{U}_{a_i}, b_i\in\mathcal{U}_{b_i}}$$ has an empty intersection.
		Since, for each $i=1,2,\dots, m, S_i(a_i)$ is a closed subset of the compact set $\mathbb{R}^n$, there exists a collection $a_{i,j}\in\mathcal{U}_{a_i}, b_{i,j}\in\mathcal{U}_{b_i}, i=1,2,\dots,m,~ j=1,2,\dots, M$ such that,
		$$\cap_{i\leq m, j\leq M}S_i(a_{i,j})=\phi$$
		This follows obviously that, 
		$$\max_{x\in\mathbb{R}^n} \min_{i\leq m, j\leq M} a_{i,j}^Tx<b_{i,j}$$
		i.e., $$\max_{x\in\mathbb{R}^n} \min_{i\leq m, j\leq M} (a_{i,j}^Tx-b_{i,j})<0$$.
		Since, the set $\mathbb{R}^n$ is compact and all the constraints are concave, it follows that there exists a convex combination, $$a^Tx=\sum_{i\leq m, j\leq M}\lambda_{i,j}(a_{i,j}^Tx-b_{i,j})$$ of the constraints $(a_{i,j}^Tx-b_{i,j})$ which is strictly negative.\\
		Set, $\lambda_i=\sum_{j=1}^{M}$, and $a_i=\lambda_i^-1\sum_{i=1}^{m}\lambda_{i,j}a_{i,j}.$\\
		Then we have, $$\lambda_ia_i^Tx=\sum_{i=1}^{M}\lambda_{i,j}a_{i,j}$$
		i.e.,  $$(\lambda_ia_i^Tx-b_{i,j})=\sum_{i=1}^{M}\lambda_{i,j}(a_{i,j}-b_{i,j})$$. This follows, 
		\begin{align*}
		\label{3.1.1}
		\sum_{i=1}^{m}(\lambda_ia_i^Tx-b_{i,j})=\sum_{i=1}^{m}\sum_{i=1}^{M}\lambda_{i,j}(a_{i,j}-b_{i,j}) \tag{3.1.1}
		\end{align*}
		Since, for all $i=1,2,\dots m$ and $j=1,2,\dots M$, $(a_{i,j}-b_{i,j})$ is strictly positive, it can be easily say that, the expression in \eqref{3.1.1} is strictly positive. i.e., 
		\begin{align*}
		\label{3.2.1}
		\sum_{i=1}^{m}(\lambda_ia_i^Tx-b_{i,j})=\sum_{i=1}^{m}\sum_{i=1}^{M}\lambda_{i,j}(a_{i,j}-b_{i,j}) < 0. \tag{3.2.1}
		\end{align*}
		Since the uncertainty is constraint-wise and $a_i\in \mathcal{U}_{a_i}; b_i\in\mathcal{U}_{b_i}$ are convex for all $i=1,2,\dots m$, the point $A=(a_1, a_2, \dots, a_m)\in\mathcal{U}=\mathcal{U}_{a_1}\times \mathcal{U}_{a_2}\dots \times \mathcal{U}_{a_m}$ and $b_i\in \mathcal{U}_{b_i} $.\\
		Thus, the expression \eqref{3.2.1} shows that the instances corresponding to the uncertain data $(A,b)\in\mathcal{U}$ are infeasible, which is a contradiction to our assumption.\\
		The converse part is obvious from the definition of RC.
	\end{proof}
	
	\subsection{Robust reformulation approach}
	For the robust reformulation technique, we use the problem \eqref{2.2} in the following form,
	\begin{align*}
	\label{3.1}
	&   &&\min \quad~~ c_1x_1+c_2x_2+\dots +c_nx_n\\
	&   &&~~\text{s.t.} \quad ~~a_{11}x_1+a_{12}x_2 + \dots +a_{1n}x_n \geq b_1 \\
	&   &&\qquad\qquad	a_{21}x_1+a_{22}x_2 + \dots +a_{2n}x_n  \geq b_2\\
	&   &&\qquad\qquad \vdots\\
	&   &&\qquad\qquad a_{m1}x_1+a_{m2}x_2 + \dots +a_{mn}x_n  \geq b_n	 \tag{3.1}
	\end{align*} 
	where $c_i, a_{ij},b_i;~ i=1,\dots, n, j=1,\dots, m,$ are uncertain data in a specified uncertainty set $\mathcal{U}.$ We give the solution methods for solving this type of optimization problem having number of constraints with uncertainty in the constraint coefficients, where the uncertain data come from a known mathematical figure such as box, paraboloid, ellipsoid, etc.
	
	\subsubsection{Uncertain RO problem with single uncertainty-affected constraint}
	Let us consider a robust linear problem, with a finite uncertainty set, where uncertainty is in parameter values. The corresponding RC is therefore linear and computationally tractable. The following is a minimisation problem with only one uncertainty-affected constraint.
	\begin{align*}
	\label{3.2}
	&     &&\min \quad c_1x_1+c_2x_2\\
	&     &&	\text{s.t.} \quad~~ a_1x_1+a_2x_2 \geq b  \tag{3.2}
	\end{align*}
	Take the instance $c_1 =2, c_2=3, a_1=2, a_2=1$, and $b_1 =1$ for the the nominal problem. The solution of the corresponding linear optimization problem is $x_1=0.5, x_2=0$ and the objective value is 1.
	If we consider the the values for $a_1$ and $a_2$ are only estimates and can be inaccurate, and that the actual value that realize are $a_1=1.99, a_2=1.01$, then the optimal solution of the nominal problem is no longer feasible for this realization. This is problematic in situations where the the decision has to be taken here and now and the constraints are hard in the sence that they must be satisfied by all realizations of actual values of parameters.
	
	Now, the robust counterpart of the linear program \eqref{3.2}, with uncertain constraint coefficients $a_1$ and $a_2$  is,
	\begin{align*}
	& &     &\min \quad c_1x_1+c_2x_2\\
	& &     &\text{s.t.} \quad ~~a_1x_1+a_2x_2 \geq b \\
	& &     &~~~ \quad \quad	\forall~ (a_1,a_2)\in \mathcal{U}; ~\mathcal{U} \subset \mathbb{R}^{1 \times 2}
	\end{align*}
	For this problem a solution $x^T=(x_1,x_2)$ must satisfy the constraint $a_1x_1+a_2x_2 \geq b$ for all $(a_1,a_2)$ in $\mathcal{U}$.
	
	Suppose that $(a_1,a_2)$ has the three realizations i.e., $\mathcal{U}  =
	\left\{
	\begin{bmatrix} 1.99\\0.99 \end{bmatrix},
	\begin{bmatrix} 2.00\\1.00  \end{bmatrix},
	\begin{bmatrix} 2.01\\1.01  \end{bmatrix}
	\right\}$, then the robust counterpart of the nominal linear program consists of these three realizations as follows:
	\begin{align*}
	\label{3.3}
	& &    &\min \quad 2x_1+3x_2\\
	& &    &\text{s.t.} \quad 1.99x_1+0.99x_2 \geq 1 \\ 
	& &     &~~~~~~~~~~~2x_1+1x_2 \geq 1 \\
	& &     &~~~~~~~ 2.01x_1+1.01x_2 \geq 1 \tag{3.3}
	\end{align*}
	The result \eqref{3.3} is a computationally tractable RC of \eqref{3.2}, which contains a finite number of constraints.
	The optimal solution to the robust counterpart is $\bar x_1=0.5025$ and $\bar x_2=0.0000$ with a corresponding objective value 1.0050. The solution has the advantage of satisfying all the constraints without increasing the objective value too much. In this case the solution is robust or immune to uncertainty. So, by Theorem\ref{th1} the robust optimal somution of \eqref{3.2} is the same as that of the RC \eqref{3.3}, i.e., the robust solution of the robust linear optimization problem \eqref{3.2} is $\bar x_1=0.5025$ and $\bar x_2=0.0000$ with the optimal objective value $\bar c=1.0050.$. 
	
	\subsubsection{Uncertain RO problem with two uncertainty-affected constraints}
	Next we consider a minimisation problem with two uncertainty-affected constraints. The same procedure can be used for \eqref{3.1} to get a robust solution.
	\begin{align*}
	\label{3.4}
	& &     &\min \quad 2x_1+3x_2\\
	& &     &\text{s.t.} \quad a_{11}x_1+a_{12}x_2 \geq b_1 \\
	& &     & \qquad~ a_{21}x_1+a_{22}x_2 \geq b_2 \tag{3.4}
	\end{align*}
	The constraint coefficients  and right-hand coefficients are uncertain. The corresponding robust counterpart is,
	\begin{align*}
	& &    &\min \quad 2x_1+3x_2\\
	& &    &\text{s.t.} \quad a_{11}x_1+a_{12}x_2 \geq b_1 \\
	& &    &\qquad ~a_{21}x_1+a_{22}x_2 \geq b_2\\
	& &    &\forall	\left\{
	\begin{bmatrix}  a_{11}  & a_{12} \\ a_{21} & a_{22}\end{bmatrix}
	\begin{bmatrix} b_1 \\ b_2   \end{bmatrix}
	\right\} \in \mathcal{U} \subset \mathbb{R}^{2 \times 2} \times \mathbb{R}^{2 \times 1} 
	\end{align*}
	To avoid confusion only the first bracket can be used to write the element,
	\begin{center}
		$\begin{bmatrix}  a_{11}  & a_{12} \\ a_{21} & a_{22}\end{bmatrix}
		\begin{bmatrix} b_1 \\ b_2   \end{bmatrix}=\begin{bmatrix}  a_{11}  & a_{12} & b_1\\ a_{21} & a_{22} & b_2\end{bmatrix}$
	\end{center}
	Suppose that $(a_i,b_i)=(a_{i1}, a_{i2},b_i)$ has the three realizations of actual values.
	i.e., $\mathcal{U}  =
	\left\{
	\begin{bmatrix} 0.95 & 1.95 & 0.95\\2.95 & 1.95 & 1.95  \end{bmatrix},
	\begin{bmatrix} 1 & 2 & 1\\3 & 2 &2  \end{bmatrix},
	\begin{bmatrix} 1.05 & 2.05 & 1.05 \\3.05 & 2.05 & 2.05  \end{bmatrix}
	\right\}$,\\ 
	then the robust counterpart of the nominal linear program is the follwoing,
	\begin{align*}
	\label{3.5}
	& &     &\min \quad  z\\
	& &     &\text{s.t:} \quad z \geq 2x_1+3x_2\\
	& &     &\qquad \quad  a_{11}x_1+a_{12}x_2 \geq b_1 \\
	& &     &\qquad \quad a_{21}x_1+a_{22}x_2 \geq b_2 \tag{3.5}
	\end{align*}
	Here $\mathcal{U}$ being finite the robust counterpart also remains finite, linear, and computationally tractable. However if the set $\mathcal{U}$ has an infinite number of elements, the number of constraints will be infinite and we have the semi-infinite linear program, i.e., a linear program with an infinite number of constraints which is generally an intractable class of problems.\\
	The optimal solution to the robust counterpart is $\bar x_1=0.5$ and $\bar x_2=0.2561$ with its objective value $\bar z=1.7683$. Hence, by Theorem\ref{th1} the robust optimal somution of \eqref{3.4} is $\bar x_1=0.5$ and $\bar x_2=0.2561$ with the robust objective value with its objective value $\bar z=1.7683$.
	
	\subsection{Robust discretisation technique}
	For the discretisation technique, we use the general form of an uncertain liner optimization problem \eqref{2.2} in the following form,
	\begin{align*}
	\label{3.6}
	&   \underset{x}{\text{max}}	
	&    & f(x) \\
	&   &&~~\text{s.t.} \quad ~~u_{11}(\xi)x_1+u_{12}(\xi)x_2 + \dots +u_{1n}(\xi)x_n \leq b_1 \\
	&   &&\qquad\qquad	u_{21}(\xi)x_1+u_{22}(\xi)x_2 + \dots +u_{2n}(\xi)x_n  \leq b_2\\
	&   &&\qquad\qquad \vdots\\
	&   &&\qquad\qquad u_{m1}(\xi)x_1+u_{m2}(\xi)x_2 + \dots +u_{mn}(\xi)x_n  \leq b_n \tag{3.6}
	\end{align*}

	\subsubsection{Interval uncertainty} 
	Consider the following uncertain linear optimization problem.
	\begin{align*}
	\label{3.7}
	& {\text{max}}	
	& & f(x)=5x_1+3x_2+4x_3 \\
	&\text{s.t.} & &(1+\xi_1+2\xi_2)x_1 +(1-2\xi_+\xi_2)x_2+(2+2\xi_1)x_3 \leq 18\\
	&         & & (\xi_1 + \xi_2)x_1 + (1-2\xi_2)x_2 + (1-2\xi_1-\xi_2)x_3\leq 16 \qquad \forall x\in \text{Box} 	\tag{3.7}
	\end{align*}
	where $\text{Box}= \{\xi |-a \leq \xi_1 \leq a; ~-b \leq \xi_2\leq b: a,b\in\mathbb{N}\}$  is the given uncertainty
	set and $x_1,x_2,x_3$ are non-negative integer variables. This is a linear optimization prolem having two uncertain constraints.\\
	It is challengiable to solve this type of problem. Here objective function has no uncertainty. Also the right-hand side of the constraints are not uncertainty affected. The only uncertainty is in the constraints and completely lies in the rectangular region of dimension $2a$ and $2b$ i.e., the region bounded by the two intervals $\{\xi_1:-a \leq \xi_1 \leq a\}$ and $\{\xi_2: -b \leq \xi_2\leq b\}~ \text{for}~ a,b\in\mathbb{N}$.\\
	Increasing the number of grid points in the box while solving the problem the box can be mostly covered and we get a finite number of linear inequalities.  See \figurename{\ref{b1}}.

	\begin{figure}[H]
		\centering
		\subcaptionbox{Small number of grids of box}[0.50\textwidth]{\includegraphics[width=0.50\textwidth]{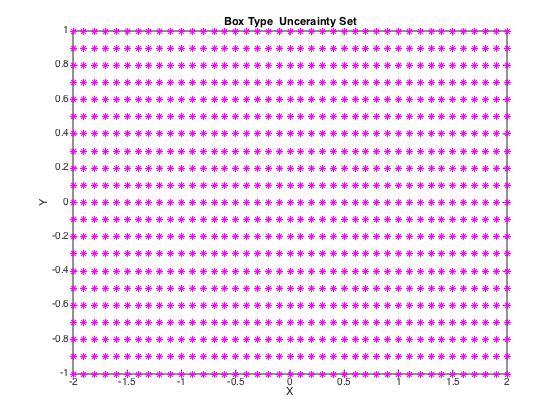}}%
		\hfill 
		\subcaptionbox{Large number of grids of box}[0.50\textwidth]{\includegraphics[width=0.50\textwidth]{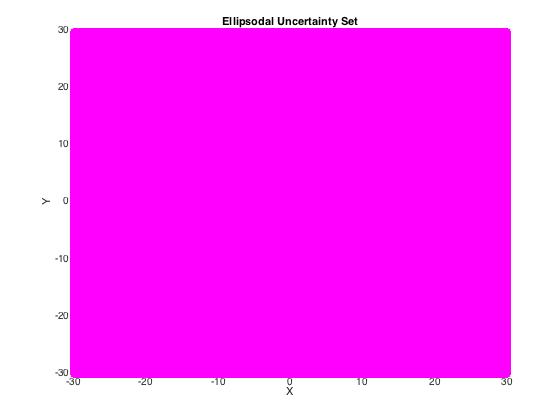}}%
		\\ 
		\caption{Grid points of box region}
		\label{b1} 
	\end{figure}
	
	In particular, if we take $a=1,b=1$, under the uncerainty set of unit square box the optimal solution to this problem is $x_1=5, x_2=2.5, x_3=0
	$ and the corresponding optimal objective value is 25. For $a=2,b=1$ the optimal solution is $x_1=4.744, x_2= 2.179, x_3=0$ and the corresponding optimal objective value is 23.333. The optimal solutions that we get, satisfy all the constraints for all realizations of data and therefore the solution is \textbf{robust}.
	
\begin{table}[h!]
		\caption{\bf{Robust optimal solutions under box uncertainty set}}
	\label{tab:table1}
	\centering
	\begin{tabular}{||c c c c||} 
		\hline
		\textbf{Uncertain Region} & \textbf{Optimal Solution} & \textbf{Optimal Value} & \textbf{No. of Constraints}\\
	${(-a\leq\xi_1\leq a;-b\leq\xi_2\leq b )}$& {($x_1,x_2,x_3)$} & {$(f_{max})$} & {} \\
		\hline\hline
		$a=1,b=1$ &  {(4.21.0, 2.32, 0.2)} & 28.810 & 92 \\
	$a=2,b=1$ &  {(4.17, 2.18,  0.19)} & 28.151 & 124\\
	$a=2,b=2$ & {(3.34, 1.67,  0.13)} & 23.760 & 875\\
	$a=5,b=5$ & {(2.721, 1.341,  0.840)} & 20.82& 2524\\
	$a=10,b=5$ &{(2.654, 1.312, 0.831)} & 20.38 & 2578\\
	$a=10,b=10$ & {(2.344, 1.252,0.721)} & 18.25 & 6874\\
	$a=20,b=10$ & {( 2.138,  1.194, 0.616)} & 14.349 & 11245\\
	$a=20,b=20$ & {(2.113, 1.205, 0.326)} & 12.315 & 18395\\
	$a=30,b=20$ & {(2.107, 1.72,0.325)} & 12.302 & 25257\\
	$a=30,b=30$ & {(2.088, 0.91,0.221)} & 11.573& 45107 \\ [1ex] 
		\hline
	\end{tabular}
\end{table}

	\subsubsection{Result Analysis}
	For the validity of our work, we have incorporated a benchmarked problem \eqref{3.8} which is taken from \cite{17}. We have solved this problem using our proposed method. It is observed that the results in our method are very closed to the results of the problem in \cite{17}. The closedness of our result is shown in Table\ref{tab:table2}. For this, we recall the problem stated in \cite{17}.
	
	\begin{align*}
	\label{3.8}
	& {\text{max}}	
	& & f(x)=5w+3z_1+4z_2 \\
	&\text{s.t.} & &(1+\xi_1+2\xi_2)w +(1-2\xi_+\xi_2)z_1+(2+2\xi_1)z_2 \leq 18\\
	&         & & (\xi_1 + \xi_2)w + (1-2\xi_2)z_1 + (1-2\xi_1-\xi_2)z_2\leq 16 \tag{3.8}
	\end{align*}
	
	The Table\ref{tab:table2} presents the optimal results of the method in \cite{17} and the results obtained from our proposed method.  

\begin{table}[h!]
	\caption{\bf{Comparison of the results by two methods}}
\label{tab:table2}
	\centering
	\begin{tabular}{||c c c c c||} 
		\hline
		\textbf{Uncertain Region} & \textbf{OptSol} &\textbf{OptSol} & \textbf{OptVal} & \textbf{OptVal}\\
	${(-a\leq\xi_1\leq a;-b\leq\xi_2\leq b )}$& {($w,z_1,z_2)$} & {($x_1,x_2,x_3)$} &{$(z_{max})$} & {$(f_{max})$}\\
		\hline\hline
		$a=1,b=1$ & {(1, 4,  3)} & {(4.210, 2.32,0.2)} & 29 & 28.81 \\
	$a=2,b=1$ & {(1, 4.21, 2.97)} &  {(4.17, 2.18,  0.19)} & 29.51 & 28.15\\
	$a=2,b=2$ & {(0.9, 3.96, 2.83)} & {(3.34, 1.67,  0.13)} & 27.7 & 23.76\\
	$a=5,b=5$ & {(0.93, 3.89,2.78)} &  {(2.72, 1.34,  0.8)} &27.44 & 20.82\\
	$a=10,b=5$ & {(0.91, 3.81, 2.67)} &  {(2.65, 1.31, 0.8)} &26.66 & 20.38\\
	$a=10,b=10$ & {(0.9, 3.73, 2.87)} &  {(2.34, 1.25,0.7)} &27.17 & 18.25\\ [1ex] 
		\hline
	\end{tabular}
\end{table}

	\subsubsection{Ellipsoidal uncertainty}
	When uncertainty lies in ellipsoid the general form of ellipsoidal uncertainty set is of the form, $\frac{\xi_1^2}{a^2}+\frac{\xi_2^2}{b^2}+\frac{\xi_3^2}{c^2} =1$. In this case, the values of $\xi_1,\xi_2,\xi_3$ are not exactly known, but they together satisfy the ellipsoidal behaviour. This ellipsoid can be subdivided to find the grid points.
	
	For the ellipsoidal uncertainty case, we consider the same problem \eqref{3.7} with ellipse as uncertainty set. The problem can be written as,
	\begin{align*}
	\label{3.9}
	&{\text{max}}	
	& & f(x)=5x_1+3x_2+4x_3 \\
	&\text{s.t} & &(1+\xi_1+2\xi_2)x_1 +(1-2\xi_+\xi_2)x_2+(2+2\xi_1)x_3 \leq 18\\
	&         & & (\xi_1 + \xi_2)x_1 + (1-2\xi_2)x_2 + (1-2\xi_1-\xi_2)x_3\leq 16~ \forall x\in \text{Ellipse} \tag{3.9}
	\end{align*}
	where $\mathcal{U}$  is the given ellipsoidal uncertainty set and $x_1,x_2,x_3$ are non-negative integer variables. The uncertain parameters $\xi_1, \xi_2$ come from an ellipse $\frac{\xi_1^2}{a^2}+\frac{\xi_2^2}{b^2} =1$. By changing the elliptic region i.e. for various value of $a$ and $b$ the problem can be solved. Solving of this linear problem satisfies all the realizations of uncertain parameters values is $x_1=4.210, x_2= 2.32, x_3= 0.2$ and the corresponding maximum objective value is 28.810.\\
	
	\begin{figure}[H] 	\label{b2} 
		\centering
		\subcaptionbox{Small number of grids ofellipse}[0.50\textwidth]{\includegraphics[width=0.45\textwidth]{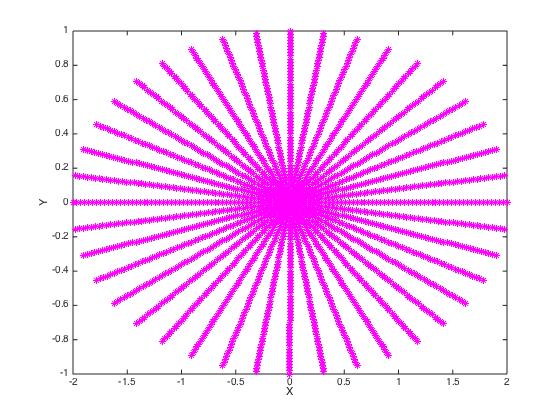}}%
		\hfill 
		\subcaptionbox{Large number of grids of ellipse}[0.50\textwidth]{\includegraphics[width=0.45\textwidth]{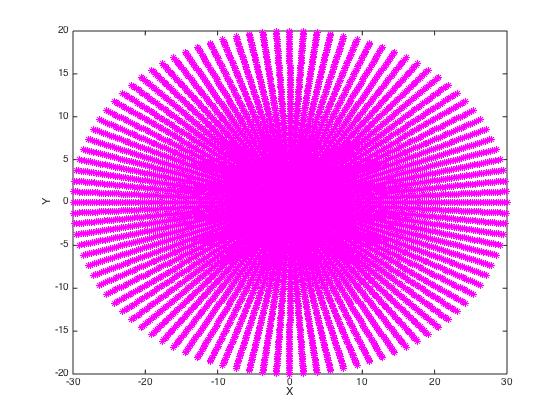}}%
		\\ 
		\caption{Grid points of ellipsoidal region}
	\end{figure}

The Table\ref{tab:table3} shows the opimal result of \eqref{3.9}. 	However, the uncertainty can be considered in the generaliged version of the ellipsoid.
\begin{table}[h!]
	\caption{\bf{Robust optimal solutions under ellipsoidal uncertainty set}}
	\label{tab:table3}
	\centering
	\begin{tabular}{||c c c c||} 
		\hline
		\textbf{Uncertain Region} & \textbf{Optimal Solution} & \textbf{Optimal Value} & \textbf{No. of constraints}\\
		${(\frac{\xi_1^2}{a^2}+\frac{\xi_2^2}{b^2} =1)}$& {($x_1,x_2,x_3)$} & {$(f_{max})$} & {} \\
		\hline\hline
		$a=1,b=1$ & {(4.7072, 3.9650, 0.000)} & 26.7585 & 1477 \\
		$a=2,b=1$ & {(4.7968, 2.5783, 0.1219)} & 24.9534 & 983\\
		$a=2,b=2$ & {(3.3348, 2.7156, 0.000)} & 18.7703 & 1590\\
		\vspace{.3cm}
		$a=5,b=5$ & {(1.2675, 1.1996, 0.3307)} &9.1225 & 1437\\
		$a=10,b=5$ & {(0.4001, 0.4001, 0.8873)} & 8.2948 & 1016\\
		$a=10,b=10$ & {(0.5865, 0.6486, 0.2622)} &4.9542 & 1184\\
		$a=20,b=10$ & {(0.1766,  0.6225,  0.5279} & 4.5912 & 950\\
		$a=20,b=20$ & {(0.2830, 0.3386, 0.1571)} & 2.5947 & 1146\\
		$a=30,b=20$ & {(0.1533, 0.3278,0.2408)} & 2.4730 & 721\\
		$a=30,b=30$ & {(0.1760,  0.2312,  0.1184)} &  1.7586 & 809\\ [1ex] 
		\hline
	\end{tabular}
\end{table}		
	
	\subsubsection{Comparative result analysis of box and elliptic uncertainty sets}
	For a particular robust linear maximization problem with certain objective function and certain right hand sides in the constraints, we compare the optimal solutions and optimal objective values of \eqref{3.7} an \eqref{3.9} under the box and ellispsoidal uncertainty sets respectively. Several number of observations help to clear the comparison decision.
	
\begin{table}[h!]
	\caption{\bf{Robust optimal solutions under box and elliptic uncertainty sets}}
	\label{tab:table4}
	\centering
	\begin{tabular}{||c c c c c||} 
		\hline
		\textbf{Uncert Region} & \textbf{OptSol(Box)} & \textbf{OptVal(Box)} & \textbf{OptSol(El)} & \textbf{OptVal(El)}\\
		${(\xi_1, \xi_2 )}$& {($x_1,x_2,x_3)$} & {$(f_{max})$} & {($x_1,x_2,x_3)$} & {$(f_{max})$} \\
		\hline\hline
		$a=1,b=1$ &  {(4.210, 2.322, 0.235)} & 28.810 & {(4.707, 3.965, 0.000)} & 26.758 \\
		$a=2,b=1$ &  {(4.173, 2.184,  0.191)} & 28.151 & {(4.797, 2.578, 0.122)} & 24.953\\
		$a=2,b=2$ & {(3.344, 1.672,  0.134)} & 23.760 & {(3.335, 2.715, 0.00)} & 18.770 \\
		$a=5,b=5$ & {(2.721, 1.341, 0.840)} & 20.82& {(1.267, 1.199, 0.338)} &9.122\\
		$a=10,b=5$ &{(2.654, 1.312, 0.831)} & 20.38 & {(0.400, 0.400, 0.887)} & 8.295\\
		$a=10,b=10$ & {(2.344, 1.252,0.721)} & 18.25 & {(0.586, 0.649, 0.262)} &4.954\\
		$a=20,b=10$ & {(2.138,  1.194, 0.616)} & 14.349 & {(0.176,  0.622,  0.528} & 4.591\\
		$a=20,b=20$ & {(2.113, 1.205, 0.326)} & 12.315 & {(0.283, 0.338, 0.157)} & 2.595\\
		$a=30,b=20$ & {(2.107, 1.721, 0.325)} & 12.302 & {(0.153, 0.328,0.249)} & 2.473 \\
		$a=30,b=30$ & {(2.088, 0.913, 0.221)} & 11.573& {(0.176,  0.231,  0.118)} &  1.758 \\ [1ex] 
		\hline
	\end{tabular}
\end{table}	

	For the RO problem with the box and ellipsoidal uncertainty sets, the comparative results are shown in Table\ref{tab:table4}. When the uncertainty set is box, the maximum objective value of \eqref{3.7} reduces for bigger values of $a$ and $b$. The bigger the uncertain region the smaller the optimal objective value. In the other words, size of uncertain region affects the maximum values of the problem. The same scenario happens for  \eqref{3.9} under the ellipsoidal uncertainty set. The bigger values of major and minor axes reduce the maximum value of the problem. But, the maximum objective value do not cross the positivity due to non-negativity conditions of decision variables and positive coefficients of objective function. 
	
	\begin{figure}
		\begin{center}
			\label{fig:fig3}
			\includegraphics[width=.7\textwidth]{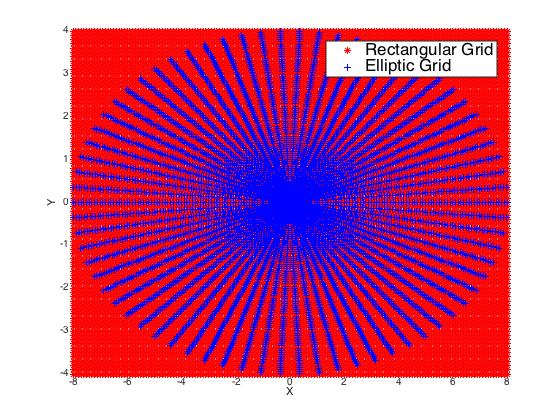}
			\caption{Simultaneous grid points of box and ellipse}
			\label{fig:3}
		\end{center}
	\end{figure}
	\par Comparing the robust optimal solutions and optimal objective values under the box uncertanty set and ellipsoidal uncertainty set respectively in Table\ref{tab:table4}, it is observed that under a certain dimension of uncertainty set the ellpsoidal set gives better result for a robust linear maximization problem. The reason of such behaviour is due to the size of uncertaity sets. For a certain dimension, box contains more number of grid points than that of an elliptic region [see Fig \ref{fig:3}] and therefore it gives more number of constraints in the RC than that of the ellipse. 
	In each case, the solutions satisfy all the constraints for all realizations of the uncertain data and therefore the solution are robust.

	\subsection{Linear optimization with various uncertainty sets}
	Recall from \eqref{2.9} that for the constraints $a_i^Tx\geq b_i; [a_i;b_i]\in \mathcal{U}_i$ in an uncertain RO problem, the uncertainty set $\mathcal{U}_i$ can be written as,
	\begin{align*}
	\mathcal{U}_i=\{[a_i;b_i]=[a_i^0;b_i^0]+\sum_{l=1}^{L_i} \xi_i^LP_i^l\quad:\xi \in \mathcal{Z}_i\}
	\end{align*}
	Obseve that $P_i^l$ can be represented in terms of the partition into entries that correspond to $a_i$ and $b_i$, which we denote $a_i^l$ and $b_i^l$, repectively. So we can write $P_i^l=[a_i^l;b_i^l]$, e.g., $P_1^2=(0,0.5,0)$, where $a_1^2=(0,0.5)$ and $b_1^2=(0)$. 
	Thus 
	\begin{align*}
	\mathcal{U}_i=\{[a_i;b_i]=[a_i^0;b_i^0]+\sum_{l=1}^{L_i} \xi_i^lP_i^l ~:\xi \in \mathcal{Z}_i\}
	\end{align*} is equivalently,
	\begin{align*}
	\mathcal{U}_i=\{[a_i;b_i]=[a_i^0;b_i^0]+\sum_{l=1}^{L_i} \xi_i^l[a_i^l;b_i^l]\quad:\xi \in \mathcal{Z}_i\}
	\end{align*}
	We always assume that the uncertain set is parametrized in affine fashion, by perturbation vector $\xi$ varying in the given perturbation set $\mathcal{Z}$.
	\begin{align}
	\label{3.3.1}
	\mathcal{U}=	\left\{[a^0;b^0]+\sum_{l=1}^L \xi_l[a^l;b^l]: \xi \in \mathcal{Z}\subset\mathbb{R}^L]\right\} \tag{3.3.1}
	\end{align}		
	Our goal now is to build a representation of expressing equivalently the robust counterpart of an uncertain linear inequality as a finite system of explicit convex constraints, with the ultimate goal to use these representations to make it explicit convex program.
	
	\par Consider a family of uncertainty-affected linear inequality
	\begin{equation}
		\label{3.3.2}
	\{A^Tx\geq b\}_{[A;b]\in \mathcal U} \tag{3.3.2}
	\end{equation}
	with the data varying in the uncertainty set (3).\\The corresponding RC is
	\begin{equation}
	\	\label{3.3.3}
	A^Tx\geq b \qquad \forall \left( [a;b]=[a^0;b^0]+\sum_{l=1}^L \xi _l[a^l;b^l]: \xi \in \mathcal Z  \right) \tag{3.3.3}
	\end{equation}
	
	\subsubsection{Tractable form of RC on Interval Uncertainty Set}
	Consider the case that the uncertainty in the constraint data is in inteval. That is, the uncertainty set $\mathcal Z$ is a box \cite{9}. Then without loss of generality, we can normalize the situation by assuming $\mathcal Z$ as follows,
	\begin{equation*}
	\mathcal Z =\text{Box}_1 \equiv \{\xi\in \mathbb R^L:||\xi||_\infty \leq1\}
	\end{equation*}
	In this case, the corresponding RC reads, \cite{9}
	\begin{align*} 
	&[a^0]^Tx+\sum_{l=1}^L \xi_l[a^l]^Tx\geq b^0+\sum_{l-1}^L \xi_lb^l \qquad \quad \forall (\zeta:||\xi||_\infty \leq 1)\\
	\Longrightarrow	&\sum_{l=1}^L \xi_l([a^l]^Tx-b^l)\geq b^0-[a^0]^Tx\qquad \quad \forall (\xi:||\xi_l||\leq1, \: l=1,2,\dots ,L \\
	\Longrightarrow&\min_{-1\leq \xi_l\leq 1} \left[\sum_{l=1}^L \xi_l([a^l]^Tx-b^l)\right]\geq b^0-[a^0]^Tx
	\end{align*}
	Applying KKT method, the concluding maximum is.
	\begin{center}
		$\sum_{l=1}^L |[a^l]^Tx-b^l|$
	\end{center}
	The tractable form of RC \eqref{3.3.3} are represented by the explicit convex constraints,
	\begin{equation*}
	[a^0]^Tx+\sum_{l=1}^L |[a^l]^Tx-b^l|\geq b^0,
	\end{equation*} 
	Considering $|[a^l]^Tx-b^l |\leq u_l$ , $u_l$ the RC admits a representation by a system of linear inequalities
	\[ \begin{cases} 
	-u_l\leq [a^l]^Tx-b^l \geq u_l,\: l=1,2,\dots,L, \\
	[a^0]^Tx+\sum_{l=1}^L u_l\geq b^0
	\end{cases}
	\]
	Now solution of original RO problems is more easier.\\
	
	\textbf{Note:} In the above case, initially $\mathcal{U}$ being infinite the number of constraints become infinite and we call it a semi-definite linear programe. This is generally an intractable class of problems. To solve this type of problems we need to consider the robust counterpart.\\
	The characteristic of an uncertain set $\mathcal{U}$ in an uncertain optimization problem depends on the shape of perturbation set $\mathcal{Z}$ associated to $\mathcal{U}.$\\
	
	\subsubsection{Tractable form of RC on ellipsoidal uncertainty Set}
	
	Consider the uncertainty where $\mathcal Z$ in \eqref{3.3.3} is an ellipse. Here w.l.o.g we can normalize the situation by assuming that $\mathcal Z$ is merely the ball of radius $\Omega$ centered at the origin: \cite{9}
	\begin{equation*}
	\mathcal Z= \text{Ball}_{\Omega}=\{\xi\in\mathbb R^L: ||\xi||_2\leq \Omega\}
	\end{equation*}
	In this case, (3) reads
	\begin{align*} 
	&[a^0]^Tx+\sum_{l=1}^L \xi_l[a^l]^Tx\geq b^0+\sum_{l-1}^L \xi_lb^l \qquad \quad \forall (\xi:||\xi||_2 \leq \Omega) \\ 
	\Longrightarrow&\left[\sum_{l=1}^L \xi_l([a^l]^Tx-b^l)\right]\geq b^0-[a^0]^Tx \qquad \quad \forall (\xi:||\xi||_2 \leq \Omega)\\
	\Longrightarrow&\min_{||\xi||_2\leq \Omega}\left[\sum_{l=1}^L \xi_l([a^l]^Tx-b^l)\right]\geq b^0-[a^0]^Tx\\
	\text{applying KKT method,} \\
	\Longrightarrow&\Omega \sqrt{\sum_{l=1}^L ([a^l]^Tx-b^l)^2}\geq b^0-[a^0]^Tx
	\end{align*}
	The RC admits a representation by the explicit convex constraint.
	\begin{equation*}
	[a^0]^Tx+\Omega \sqrt{\sum_{l=1}^L ([a^l]^Tx-b^l)^2}\geq b^0
	\end{equation*}
	The above inequality is called \textbf{conic quadratic} inequality.
	
	Thus given an uncertain optimization problem with constaints $\{a^Tx\leq b\}_{[a;b]\in \mathcal{U}}$ over an uncertainty-affected set $\mathcal{U}$,  we can find the tractable representation of its RC having explicit convex constraint.\\

	\section{Conclusion}
	In classical linear optimization problems, all the input data are assumed to be known.  However, in real world problems the data is not always certain. Real-world optimization problems that come from design of medical, physical or engineering systems, often contain parameters whose values can not be measured exactly because of various technical difficulties, or due to some incomplete data. Robust optimization is an important tool in optimization that deals with uncertainty.  In sensitive analysis and some other real-world applications, such parameter uncertainties could negatively affect the quality of solutions. We should keep in mind that the proposed robust formulations are theoretically valid only in a neighbourhood of the nominal value. Therefore, their degrees of success will likely be dependent on the quality of parameter estimations and on the magnitude of parameter variations.
	 
	In this paper, we have given a concise introduction and some basic preliminaries of RO that have appeared in the literature to address the concept of robust optimization. We have taken special care to formalize the robust counterparts and the constructions of uncertainty sets and provide specific examples where they have been needed. Numerical experiment is conducted by proposing optimization methods considering some types of uncertain linear optimization problems where uncertainty appears only in the constaints. For solution purpose, we propose our method to solve the numerical RO problems and discussed the comparative results in each case. We restricted the solution methods over interval and ellipsoidal uncertainty sets only to make the discussion easier. To summarize, much opportunities exist for growth and novel research in the field of linear robust optimization, driven by numerical example and practical applications. This paper should serve as a guide to those entering in the exciting and challenging subject of optimization under uncertainties.\\
	
	\textbf{Data Availability} The data that support the findings of this study are generated by the authors and are available from the corresponding author upon request. Also, the author confirms that the data generated in this manuscript are not included in any published paper or in any manuscript that is under consideration for review.\\
	\textbf{Conflict of interest} The authors have no relevant dfinancia or non financial interest to disclose. The authors declair that they have no conflict of interests.

\end{document}